\newcommand{\Conf}{\mathrm{Conf}}
\newcommand{\pt}{\mathrm{pt}}
\definecolor{coloryellow}{RGB}{240,228,66}
\definecolor{colorskyblue}{RGB}{86,180,233}
\definecolor{colorvermillion}{RGB}{213,94,0}
\newcommand{\graphfont}{\mathsf}
\newcommand{\thetagraph}[1]{\graphfont{\Theta}_{#1}}
\newcommand{\stargraph}[1]{\graphfont{S}_{#1}}
\newcommand{\graf}{\graphfont{\Gamma}}
\DeclareSymbolFont{sfletters}{OT1}{cmss}{m}{n}
\DeclareMathSymbol{\sTheta}{\mathord}{sfletters}{"02}
\theoremstyle{definition}
\newtheorem{definition}{Definition}[section]
\theoremstyle{plain}
\newtheorem{proposition}[definition]{Proposition}
\newtheorem{lemma}[definition]{Lemma}
\newtheorem{theorem}[definition]{Theorem}
\theoremstyle{remark}
\newtheorem{remark}[definition]{Remark}
    \DeclareFontFamily{U}{wncy}{}
    \DeclareFontShape{U}{wncy}{m}{n}{<->wncyr10}{}
    \DeclareSymbolFont{mcy}{U}{wncy}{m}{n}
    \DeclareMathSymbol{\Sha}{\mathord}{mcy}{"58}
\newsavebox{\foobox}
\title{The topological complexity of pure graph braid groups is stably maximal}
\author{Ben Knudsen}
\email{b.knudsen@northeastern.edu}
\address{Department of Mathematics, Northeastern University, Boston, MA 02115, USA}
\begin{document}

\begin{abstract}
We prove Farber's conjecture on the stable topological complexity of configuration spaces of graphs. The conjecture follows from a general lower bound derived from recent insights into the topological complexity of aspherical spaces. Our arguments apply equally to higher topological complexity.
\end{abstract}

\maketitle

\section{Introduction}

The difficulty of collision-free motion planning is measured by the topological complexity of configuration spaces. In many situations, the ambient workspace is naturally modeled as a graph, and the corresponding complexity was described conjecturally by Farber in 2005 \cite{Farber:CFMPG}. Our purpose is to prove this conjecture.

Write $\Conf_k(\graf)$ for the configuration space of $k$ ordered points in the graph $\graf$, denote the number of vertices of $\graf$ of valence at least $3$ by $m(\graf)$, and write $\mathrm{TC}_r$ for the $r$th higher topological complexity \cite{Farber:TCMP, Rudyak:HATC}. Farber's conjecture is the case $r=2$ of the following result.

\begin{theorem}\label{thm:farber}
Let $\graf$ be a connected graph with $m(\graf)\geq2$. For $k\geq 2m(\graf)$ and $r>0$, we have the equality 
\[\frac{1}{r}\,\mathrm{TC}_r(\Conf_k(\graf))=m(\graf).\]
\end{theorem}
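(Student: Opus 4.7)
The plan is to match an upper and a lower bound on $\mathrm{TC}_r(\Conf_k(\graf))$, each equal to $r m(\graf)$.

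For the upper bound, I would invoke Abrams' theorem, which identifies $\Conf_k(\graf)$ up to homotopy with a non-positively curved cube complex of dimension at most $m(\graf)$, whose top-dimensional cubes correspond to simultaneous independent local moves at the $m(\graf)$ essential vertices. The standard inequality $\mathrm{TC}_r(X) \leq r\dim X$ for any CW complex then yields $\mathrm{TC}_r(\Conf_k(\graf)) \leq r m(\graf)$.

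For the lower bound, the strategy is to exhibit the torus $T^{m(\graf)}$ as a homotopy retract of $\Conf_k(\graf)$, then conclude via the classical identity $\mathrm{TC}_r(T^n) = rn$ together with the monotonicity of $\mathrm{TC}_r$ under homotopy retracts. The hypothesis $k \geq 2 m(\graf)$ permits choosing, at each essential vertex $v_j$, a small tripod subgraph $Y_j \subset \graf$ with the $Y_j$ pairwise disjoint, and placing two particles in each; since $\Conf_2(Y_j) \simeq S^1$ by Abrams' $Y$-exchange, the induced map $\prod_j \Conf_2(Y_j) \hookrightarrow \Conf_k(\graf)$ realizes an embedding $T^{m(\graf)} \hookrightarrow \Conf_k(\graf)$. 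A retraction is then obtained by recording the ``winding numbers'' of configurations around the $Y_j$, equivalently by projecting onto the $\mathbb{Z}^{m(\graf)}$-summand of $H_1(\Conf_k(\graf);\mathbb{Z})$ generated by the $Y$-classes and realizing the projection at the level of spaces as a map to $K(\mathbb{Z}^{m(\graf)},1) = T^{m(\graf)}$.

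The principal obstacle is to realize the retraction at the space level and to verify that the $m(\graf)$ $Y$-classes are linearly independent in $H_1$. I would approach this combinatorially using Abrams' cube complex, where the $Y$-classes admit a transparent cellular description and the desired splitting should arise from projection onto a coordinate direction. A cleaner alternative, consonant with the abstract's reference to recent insights into the topological complexity of aspherical spaces, is to bypass the retract construction via a cohomological lower bound: pulling back the top class of $T^{rm(\graf)}$ along the $r$-fold product of winding-number maps $\Conf_k(\graf)^r \to T^{rm(\graf)}$ produces $rm(\graf)$ zero-divisor classes in $H^*(\Conf_k(\graf)^r)$ whose product is non-zero, whence the cup-length inequality $\mathrm{TC}_r \geq \mathrm{zcl}_r$ yields the lower bound immediately.
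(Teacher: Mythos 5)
Your upper bound is essentially the paper's (though note that Abrams' cube complex itself can have dimension much larger than $m(\graf)$ --- think of many disjoint edges in a subdivided tree; the bound $\dim\simeq m(\graf)$ on the \emph{homotopy} dimension is Świątkowski's theorem, which is what the paper cites). The lower bound, however, has a fatal gap, and it is precisely the gap this paper exists to circumvent. Your retraction $\Conf_k(\graf)\to T^{m(\graf)}$ exists if and only if the inclusion of the torus is split on $H_1$, equivalently if and only if there are classes in $H^1(\Conf_k(\graf))$ restricting to a basis of $H^1$ of the embedded torus; and the existence of such ``winding number'' classes is \emph{equivalent to planarity} of $\graf$ \cite{Knudsen:FCPG}, as the introduction points out. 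So both your retract argument and your zero-divisor cup-length alternative fail for non-planar graphs. (The abstract's ``recent insights into the topological complexity of aspherical spaces'' refers not to cup-length bounds but to the subgroup-theoretic criterion of Farber--Oprea, Theorem \ref{thm:lower bound}.) The paper's replacement is genuinely non-abelian: it detects the toric subgroups $T_\lambda$ by homomorphisms $\delta_\mu$ into products of free groups built from theta graphs, where the detecting element is a product of commutators and hence invisible to homology (see the remark following Lemma \ref{lem:star injective}), and it feeds two conjugation-disjoint tori $T_\lambda$, $T_\mu$ into Theorem \ref{thm:lower bound}.

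Even in the planar case, where your classes do exist, the arithmetic falls short. With the reduced convention used here, $\mathrm{TC}_r(T^n)=(r-1)n$, not $rn$; a retract onto $T^{m(\graf)}$ therefore yields only $\mathrm{TC}_r\geq (r-1)\,m(\graf)$, and likewise a single map to a torus furnishes only $(r-1)\,m(\graf)$ zero-divisors with non-vanishing product. The missing $m(\graf)$ is exactly what the paper extracts from the extra factor $G^{r-2}$ in Theorem \ref{thm:lower bound}, via $\mathrm{cd}_{\mathbb{Q}}(P_{2d}(\graf))\geq d$ (nonvanishing of $H_d(\Conf_{2d}(\graf);\mathbb{Q})$), together with the use of \emph{two} disjoint tori rather than one. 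So even granting your retraction, you would prove only $\frac{1}{r}\mathrm{TC}_r\geq\frac{r-1}{r}m(\graf)$, which does not give the theorem.
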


The case $m(\graf)=0$ is trivial, and the case $m(\graf)=1$ is completely understood---see \cite[Thm. A]{LuetgehetmannRecio-Mitter:TCCSFAGBG}, for example.

Using a cohomological argument, Farber proved this result for trees and $r=2$ \cite{Farber:CFMPG}. This argument was later adapted to establish the same result for fully articulated and banana graphs and $r=2$ \cite{LuetgehetmannRecio-Mitter:TCCSFAGBG}, for trees and $r>0$ \cite{Aguilar-GuzmanGonzalezHoekstra-Mendoza:FSMTMHTCOCST}, and for planar graphs and $r>0$ \cite{Knudsen:FCPG}. 

The idea of Farber's argument is easily explained. First, the quantity $\frac{1}{r}\mathrm{TC}_r$ is bounded above by the homotopy dimension of the background space, which for $\Conf_k(\graf)$ is $m(\graf)$ in the regime of interest. Second, this dimension is detected by a map from a torus of dimension $m(\graf)$ (described geometrically below). Third, one can produce long cup products in cohomology by combining classes detecting circle factors of this torus---provided such classes exist. Since a variant of the cup length bounds topological complexity from below, one obtains the desired conclusion.

Unfortunately, the cohomological decomposability of the torus in question is equivalent to planarity \cite{Knudsen:FCPG}, so the requisite classes fail to exist in the non-planar setting. We circumvent this obstacle by working at the level of fundamental groups, leveraging a theorem of \cite{FarberOprea:HTCAS}, following \cite{GrantLuptonOprea:NLBTCAS}, on the higher topological complexity of aspherical spaces to establish the necessary lower bound (Theorem \ref{thm:general lower bound}).


\subsection{Conventions} We say that subgroups of a fixed group are disjoint if they intersect trivially. We write $\mathrm{cd}_R(G)$ for the cohomological dimension of the group $G$ over the commutative ring $R$, which is to say the minimal length of a projective resolution of the trivial module $R$ over the group ring $R[G]$. We set $\mathrm{cd}(G)=\mathrm{cd}_\mathbb{Z}(G)$.

Given a path $\gamma$ in a space $X$, we denote its path homotopy class by $[\gamma]$, its reverse by $\bar\gamma$, and its change-of-basepoint isomorphism by $\hat\gamma:\pi_1(X,\gamma(0))\to \pi_1(X,\gamma(1))$. We denote the concatenation operation on paths by $\star$.

We use the ``reduced'' convention for topological complexity, in which $\mathrm{TC}(\pt)=0$.

\subsection*{Acknowledgements} The seeds of this paper were sown when Andrea Bianchi noticed an early error in \cite{Knudsen:FCPG}, prompting the author to reconsider the role of cohomology in the non-planar setting. The ideas came during the 2022 workshop ``Topological Complexity and Motion Planning,'' held at CMO BIRS in Oaxaca, and the author thanks Dan Cohen, Jes\'{u}s Gonz\'{a}lez, and Lucile Vandembroucq for their skillful organization. The author benefited from fruit-full conversations with Daciberg Gon\c{c}alves and Teresa Hoekstra Mendoza, and he thanks Andrea Bianchi, Jes\'{u}s Gonz\'{a}lez, and the anonymous referee for comments on an earlier draft. This work was supported by NSF grant DMS-1906174.

\section{Graphs and their configuration spaces}

For our purposes, a graph is a finite CW complex $\graf$ of dimension at most $1$. The degree or valence $d(v)$ of a vertex $v$ of $\graf$ is the number of connected components of its complement in a sufficiently small open neighborhood. A vertex of valence at least $3$ is called essential.

The configuration spaces of a graph $\graf$ depend only on the homeomorphism type of $\graf$; in particular, they are invariant under subdivision. The fundamental fact concerning these configuration spaces is the following.

\begin{theorem}[{\cite{Abrams:CSBGG}}]
For any graph $\graf$ and $k\geq0$, the space $\Conf_k(\graf)$ is aspherical.
\end{theorem}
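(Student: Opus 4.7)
The plan is to follow Abrams' strategy of replacing the topological configuration space with a combinatorial model—the \emph{discretized configuration space}—and then proving that this model is locally CAT(0), so that its universal cover is a genuine CAT(0) space and in particular contractible.

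More precisely, I would first define the discretized configuration space $D_k(\graf) \subseteq \graf^k$ as the cubical subcomplex consisting of those products $c_1 \times \cdots \times c_k$ of (open) cells of $\graf$ whose closures are pairwise disjoint. Its top-dimensional cubes correspond to $k$-tuples of pairwise disjoint edges, and a $d$-dimensional cube records a placement of $k - d$ distinct particles at vertices together with $d$ particles traveling along pairwise disjoint edges. The first task is then to show that, after passing to a sufficiently fine subdivision of $\graf$ (one whose edge-paths between essential vertices and loops at essential vertices are long enough in terms of $k$), the inclusion $D_k(\graf) \hookrightarrow \Conf_k(\graf)$ is a homotopy equivalence. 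Since configuration spaces are subdivision-invariant, replacing $\graf$ by such a subdivision entails no loss of generality, and the equivalence is produced by a straight-line-style deformation retraction that pushes a configuration into the diagonal of each cell-product whose closures meet, an argument that can be made precise by induction on the number of particles lying in overlapping cells.

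Next I would verify that $D_k(\graf)$ is a nonpositively curved cube complex by checking Gromov's link condition: the link of every vertex is a flag simplicial complex. A vertex of $D_k(\graf)$ is a $k$-tuple $(v_1,\ldots,v_k)$ of distinct vertices of $\graf$, and a simplex in its link is specified by a set of pairwise disjoint half-edges attached to the chosen $v_i$'s, placed at distinct indices. The flag condition then reduces to the following combinatorial statement: if the half-edges corresponding to a collection of link vertices are pairwise compatible (pairwise disjoint, attached at distinct indices), then they are jointly compatible. This is immediate from the fact that ``pairwise disjoint'' and ``attached at distinct indices'' are both conditions testable two-at-a-time.

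With the link condition in hand, Gromov's theorem produces a locally CAT(0) metric on $D_k(\graf)$. The Cartan--Hadamard theorem for complete CAT(0) spaces (which applies to the universal cover of any complete, connected, locally CAT(0) space) yields that the universal cover is CAT(0) and therefore contractible, giving the asphericity of $D_k(\graf)$ and hence of $\Conf_k(\graf)$. The step I expect to be the main obstacle is the homotopy equivalence $D_k(\graf) \simeq \Conf_k(\graf)$ after subdivision, since one must keep careful track of how many particles can coexist near each essential vertex or along each edge-path, and produce an explicit, simultaneously continuous deformation retraction; by contrast, the link condition and the CAT(0) conclusion are essentially formal once the cube-complex model is in place.
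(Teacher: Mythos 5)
The paper does not prove this statement --- it is quoted from Abrams's thesis --- and your outline is precisely Abrams's argument: pass to the discretized configuration space after sufficient subdivision, verify Gromov's link condition to get a nonpositively curved cube complex, and conclude contractibility of the universal cover via Cartan--Hadamard. The outline is correct, with the only substantial work concentrated in the deformation retraction $\Conf_k(\graf)\simeq D_k(\graf)$ (valid once every essential path and cycle is long enough relative to $k$), which you rightly identify as the main obstacle.
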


In particular, if $\graf$ is connected and $m(\graf)>0$, then the configuration space $\Conf_k(\graf)$ is a classifying space for its fundamental group, the pure graph braid group on $k$ strands, denoted $P_k(\graf)$. We work with a basepoint $x_0\in\Conf_k(\graf)$ chosen so that every coordinate is a bivalent vertex, which is always achievable after subdivision.

Fixing a vertex $v$, consider the subspace $\iota_v:\Conf_k(\graf)_v\subseteq \Conf_k(\graf)$ consisting of configurations with at most one coordinate in the open star of $v$.

\begin{proposition}\label{prop:local configurations}
If the open star of $v$ is contractible, then $\iota_v$ is a homotopy equivalence.
\end{proposition}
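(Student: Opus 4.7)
The plan is to construct an explicit strong deformation retraction of $\Conf_k(\graf)$ onto $\Conf_k(\graf)_v$, which exhibits $\iota_v$ as a homotopy equivalence. The contractibility hypothesis forces the closed star $\overline{\st(v)}$ to be a finite tree whose unique internal vertex is $v$, incident to leaves $\ell_1,\ldots,\ell_{d(v)}$ lying in $\graf\setminus\st(v)$---otherwise $\st(v)$ would contain a loop at $v$ and fail to be contractible. I would parameterize each edge from $v$ to $\ell_j$ as $[0,1]$ with $v$ at $0$, giving every $x\in\overline{\st(v)}$ a radial coordinate $r(x)\in[0,1]$.

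At the particle level, the deformation pushes each point of $\st(v)$ radially outward toward the corresponding leaf while fixing $v$, the leaves, and every point of $\graf\setminus\st(v)$; concretely, one integrates a smooth vector field on each open edge of $\overline{\st(v)}$ that vanishes at both endpoints, such as $\dot r = r(1-r)$, and reparameterizes time to $[0,1]$. Applied to a configuration $c$, this flow carries every particle initially in $\st(v)\setminus\{v\}$ to the corresponding leaf $\ell_j\in\graf\setminus\st(v)$ while leaving any particle originally at $v$ in place, so that at the terminal time at most one particle remains in $\st(v)$, namely the one originally at $v$ if any.

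The main obstacle is turning this particle-wise picture into a continuous map into $\Conf_k(\graf)$: naively, two particles initially on the same edge of $\overline{\st(v)}$ collide in the limit at the common leaf, and a pushed particle can collide with another particle of $c$ stationed at that leaf or on an adjacent edge of $\graf$. I would resolve this by making the flow configuration-dependent, modulating the outward speed of each pushed particle by smooth bump functions of the pairwise distances to potential obstructions on its edge of $\overline{\st(v)}$, and, when necessary, continuing the trajectory past $\ell_j$ onto a continuously chosen adjacent edge of $\graf$. The principal technical work lies in verifying that these modifications depend continuously on $c$ across the strata where $|\{i:x_i\in\st(v)\}|$ changes; once that is accomplished, the identities $H_0 = \id$, $H_s|_{\Conf_k(\graf)_v} = \id$, and $H_1(\Conf_k(\graf)) \subseteq \Conf_k(\graf)_v$ follow immediately from the construction, so $\iota_v$ is a homotopy equivalence.
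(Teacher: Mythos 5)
The paper's own proof is a one-line citation of \cite[Lem.~2.0.1]{AgarwalBanksGadishMiyata:DCCSG} (``after subdividing''), so you are in effect trying to reprove that lemma from scratch; the radial-pushing idea is indeed the right one, but your argument has a gap that is more serious than the continuity check you defer. The statement is false without an additional hypothesis guaranteeing that the complement of the open star has room to absorb the displaced particles, and that hypothesis is precisely what the phrase ``after subdividing'' and the hypotheses of the cited lemma are supplying. Concretely, take $\graf=\stargraph{3}$ unsubdivided, $v=v_0$ the central vertex, and $k=4$: the open star of $v_0$ is contractible, but its complement is the three leaves, so $\Conf_4(\stargraph{3})_{v_0}$ is a disjoint union of $24$ contractible pieces ($\chi=24$), whereas $\Conf_4(\stargraph{3})$ is a $24$-sheeted cover of the connected, non-contractible $1$-complex $\mathrm{UConf}_4(\stargraph{3})$ and hence has $\chi\leq 0$. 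No deformation retraction can exist, and your flow fails visibly here: a second particle on an edge terminating in a valence-one vertex has nowhere to go and stays trapped in the open star. Any correct proof must isolate and use the hypothesis that rules this out (in practice, that $\graf$ is sufficiently subdivided relative to $k$ away from $v$); your write-up never does.

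Even granting enough room, the step you defer is the entire content of the lemma. Slowing a pushed particle by a bump function in its distance to an obstruction means it never reaches $r=1$ and so never leaves the open star; you must instead push the obstruction onward first, and ``continuing past $\ell_j$ onto a continuously chosen adjacent edge'' demands a coherent, collision-avoiding choice at every vertex downstream. Making these cascading choices continuous across the strata where particles enter and leave $\st(v)$ is exactly the delicate point in Abrams-style discretization arguments and historically a source of errors; a complete proof typically orders the particles on each branch and parks them at successive vertices of a sufficiently long subdivided edge. Finally, a small slip at the outset: contractibility of the open star does not force the closed star to be a tree (two parallel edges from $v$ to the same vertex $\ell$ yield a contractible open star whose closure contains a cycle), although this does not affect your radial coordinate.
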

\begin{proof}
The assumption guarantees that $\Conf_k(\graf)_{v}$ concides with the subspace considered in \cite[Lem. 2.0.1]{AgarwalBanksGadishMiyata:DCCSG} after subdividing.
\end{proof}

In the remainder of the paper, we work with a connected graph $\graf$. We subdivide $\graf$ to guarantee that every essential vertex has a contractible open star (equivalent to requiring that $\graf$ have no self-loops) and that distinct essential vertices have disjoint closed stars. We fix a parametrization of each edge of $\graf$ and an ordering of the set of edges at each essential vertex.

\section{A general lower bound}\label{section:argument}

The ``higher'' or ``sequential'' topological complexity $\mathrm{TC}_r$ is a numerical homotopy invariant defined in \cite{Rudyak:HATC} and developed in \cite{BasabeGonzalezRudyakTamaki:HTCS}, recovering Farber's topological complexity \cite{Farber:TCMP} for $r=2$ and the Lusternik--Schnirelmann category for $r=1$. The reader is referred to these references for definitions; we recall only that $\frac{1}{r}\mathrm{TC}_r$ is bounded above by the homotopy dimension in non-pathological settings, e.g., for spaces homotopy equivalent to CW complexes.

Theorem \ref{thm:farber} is an immediate consequence of the following result, together with the dimensional bound and the well-known fact that the homotopy dimension of $\Conf_k(\graf)$ is bounded above by $m(\graf)$, independent of $k$ \cite{Swiatkowski:EHDCSG}.

\begin{theorem}\label{thm:general lower bound}
Let $\graf$ be a connected graph. For any $r> 0$ and $k\geq4$, we have the inequality 
\[
\frac{1}{r}\,\mathrm{TC}_r(\Conf_k(\graf))\geq \min\left\{\left\lfloor\frac{k}{2}\right\rfloor, m(\graf)\right\}.
\]
\end{theorem}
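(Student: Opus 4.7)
The plan is to apply the theorem of Farber--Oprea from \cite{FarberOprea:HTCAS}, which bounds the higher topological complexity of an aspherical $K(G,1)$ below by the cohomological dimension of a product of subgroups of $G$ satisfying a conjugacy-disjointness hypothesis. Since $\Conf_k(\graf)$ is a classifying space for $P_k(\graf)$ by Abrams's theorem, the task reduces to producing $r$ pairwise conjugacy-disjoint subgroups of $P_k(\graf)$, each isomorphic to $\mathbb{Z}^d$ where $d=\min\{\lfloor k/2\rfloor, m(\graf)\}$. Their product is then free abelian of rank $rd$ and of cohomological dimension $rd$, yielding $\mathrm{TC}_r(\Conf_k(\graf))\geq rd$ and hence $\frac{1}{r}\mathrm{TC}_r\geq d$.

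To construct the tori, select essential vertices $v_1,\dots,v_d$ of $\graf$ and assign two particles to each, using $2d \leq k$ particles total (the hypothesis $k\geq 4$ together with the definition of $d$ guarantees this). Using the local model of Proposition~\ref{prop:local configurations} and the disjointness of the stars of distinct essential vertices arranged by our subdivision convention, one obtains a ``Y-loop'' $\gamma_l$ supported in the star of $v_l$ in which one of the two nearby particles traverses a closed path through $v_l$ while its partner acts as a local obstruction. The classes $[\gamma_l]$ have infinite order and commute pairwise because their supporting regions are disjoint, producing a reference subgroup $A\cong\mathbb{Z}^d\leq P_k(\graf)$. To obtain $r$ copies $A_1,\dots,A_r$, take change-of-basepoint paths $\beta_1,\dots,\beta_r$ that agree outside a neighborhood of the $v_l$ but differ near each essential vertex by distinct local data --- for instance, by transporting the particles along different incident edges or to different relative positions --- and set $A_i=\hat{\beta}_i(A)$.

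The main obstacle will be verifying that the $A_i$ are pairwise conjugacy-disjoint in $P_k(\graf)$: for $i\neq j$ and any $g\in P_k(\graf)$, one must show $A_i\cap gA_jg^{-1}=\{1\}$. The anticipated route is to reduce this to a local claim at each essential vertex via Abrams's discrete model, isolating the contribution of an element to a single $v_l$ by a ``local signature'' or normal-form invariant on braids. The crux is to show that conjugation in $P_k(\graf)$ cannot alter the local edge-data encoded in this signature at $v_l$, so that the generators produced by genuinely different $\beta_i$ remain non-conjugate even as products with generators at other vertices. This local rigidity is the technical heart of the argument; once it is in hand, the Farber--Oprea bound delivers the required inequality.
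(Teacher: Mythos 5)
Your overall frame---asphericity plus the Farber--Oprea lower bound for aspherical spaces---is the right one, but the construction at the heart of your plan cannot work. The subgroups $A_i=\hat\beta_i(A)$ are all obtained from a single subgroup $A$ by change-of-basepoint isomorphisms along paths with common endpoints, so $A_i$ and $A_j$ differ by the inner automorphism of conjugation by $[\beta_i\star\bar\beta_j]$; in particular they are conjugate subgroups of $P_k(\graf)$. Taking $g=[\beta_i\star\bar\beta_j]$ gives $A_i\cap gA_jg^{-1}=A_i\cong\mathbb{Z}^d\neq\{1\}$, so no two of your subgroups are conjugacy-disjoint, and the ``local rigidity'' statement you hope to prove is false for them. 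You have also overcomplicated the input: as quoted in the paper, the Farber--Oprea theorem needs only \emph{two} subgroups $H$ and $K$ with every conjugate of $H$ disjoint from $K$, and then yields $\mathrm{TC}_r(BG)\geq\mathrm{cd}(H\times K\times G^{r-2})$; the remaining $r-2$ factors are copies of the whole group, whose rational cohomological dimension is at least $d$ because $H_d(\Conf_{2d}(\graf);\mathbb{Q})\neq0$. So producing $r$ pairwise conjugacy-disjoint tori is both impossible by your method and unnecessary.

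The paper's two tori are built from \emph{disjoint sets of particles}: after reducing to $k=2d$ by monotonicity of $\mathrm{TC}_r$ in $k$, one chooses two binary $W$-partitions $\lambda$ and $\mu$ of $\{1,\ldots,2d\}$ with $\lambda\cap\mu=\varnothing$, so that at each essential vertex the pair of particles generating $T_\lambda$ is disjoint from the pair generating $T_\mu$. Conjugacy-disjointness is then established without any normal-form or braid-combinatorial analysis: one constructs a detection homomorphism $\delta_\mu:P_k(\graf)\to G_W$ to a product of free groups (by projecting to the two $\mu$-labelled particles near each vertex and mapping to a theta graph) such that $T_\lambda\subseteq\ker(\delta_\mu)$ while $\delta_\mu$ is injective on $T_\mu$; since $\ker(\delta_\mu)$ is normal, it contains \emph{every} conjugate of $T_\lambda$, and these all meet $T_\mu$ trivially. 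This normal-subgroup trick is exactly the device that replaces the unproven (and, for your subgroups, false) rigidity claim in your sketch. The injectivity on $T_\mu$ itself rests on a commutator computation in the free group $\pi_1(\thetagraph{3})$, which is nontrivial in the group even though it dies in homology---this is why the argument must be run at the level of fundamental groups rather than cohomology. Finally, note that the quoted theorem requires $r\geq2$; the case $r=1$ needs a separate argument via Eilenberg--Ganea, which your sketch omits.
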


This result recovers theorems of \cite{Aguilar-GuzmanGonzalezHoekstra-Mendoza:FSMTMHTCOCST} and \cite{Knudsen:FCPG} for trees and planar graphs, respectively. In these cases, the estimate follows from consideration of the zero-divisor cup length in the cohomology of the relevant configuration space. If such a strategy is available in the non-planar setting, it is not without significant and non-obvious modification \cite[Thm. 8.1]{Knudsen:FCPG}. Instead, we aim to leverage the advances in the study of the topological complexity of aspherical spaces made in the wake of \cite{GrantLuptonOprea:NLBTCAS}.

\begin{theorem}[{\cite[Thm. 2.1]{FarberOprea:HTCAS}}]\label{thm:lower bound}
Let $G$ be a discrete group with subgroups $H$ and $K$, and fix $r\geq 2$. If every conjugate of $H$ is disjoint from $K$, then \[\mathrm{TC}_r(BG)\geq \mathrm{cd}(H\times K\times G^{r-2}).\]
\end{theorem}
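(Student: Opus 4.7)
The plan is to apply Theorem \ref{thm:lower bound} to $G := P_k(\graf)$. Set $m := \min\{\lfloor k/2 \rfloor, m(\graf)\}$; the hypothesis $k \geq 4$ gives $k \geq 2m$. For $r \geq 2$, it suffices to produce subgroups $H, K \leq G$, each isomorphic to $\mathbb{Z}^m$, with every conjugate of $H$ disjoint from $K$: then $H \times K \times G^{r-2}$ contains $\mathbb{Z}^{rm}$ (using $H \leq G$ to supply $\mathbb{Z}^m$ for each $G^{r-2}$ factor), forcing $\mathrm{cd}(H \times K \times G^{r-2}) \geq rm$ and yielding the bound. The cases $r \in \{0, 1\}$ are vacuous or reduce to the $L$--$S$ category estimate on the subtorus $B\mathbb{Z}^m \subseteq BG$.

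\textbf{Geometric construction.} Fix essential vertices $v_1, \ldots, v_m$ whose closed stars are pairwise disjoint. Since $k \geq 2m$, select a basepoint with particles $i$ and $m+i$ on distinct half-edges at $v_i$ for each $i$, and the remaining particles parked far from the chosen stars. Proposition \ref{prop:local configurations} permits working inside each contractible star; combined with the valence $d(v_i) \geq 3$, this produces a pure loop $\tau_i \in G$ localised in $\st(v_i)$ and involving only particles $i$ and $m+i$. The $\tau_i$ commute pairwise (disjoint stars, disjoint active particle pairs), giving $H := \langle \tau_1, \ldots, \tau_m \rangle$. For $K$, we mirror the construction but cyclically shift which particles act at each vertex: $\sigma_i$ is a pure loop whose nontrivial activity at $v_i$ uses particles $i+1$ and $m+i+1$ (indices mod $m$), connected to $v_i$ by short transport paths chosen along a spanning tree of $\graf \setminus \bigcup_j \st(v_j)$. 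At each $v_i$, the pair $\{i+1, m+i+1\}$ is disjoint from $\{i, m+i\}$ used by $\tau_i$; set $K := \langle \sigma_1, \ldots, \sigma_m \rangle$.

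\textbf{Disjointness of conjugates.} For each essential vertex $v$ and each particle index $j$, the known abelianization of $P_k(\graf)$ (in the discrete model of Abrams and Swiatkowski) supplies a homomorphism $\alpha_{v, j} : G \to \mathbb{Z}$ recording the algebraic participation of particle $j$ in the local activity at $v$. Assembling these into $\Phi : G \to \bigoplus_{(v, j)} \mathbb{Z}$, we find $\Phi(\tau_i)$ supported on the coordinates $(v_i, i)$ and $(v_i, m+i)$, while $\Phi(\sigma_i)$ is supported on $(v_i, i+1)$ and $(v_i, m+i+1)$; the transport pieces of $\sigma_i$ avoid essential vertices and contribute trivially. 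Thus $\Phi(H)$ and $\Phi(K)$ lie in disjoint sets of coordinates, so $\Phi(H) \cap \Phi(K) = 0$. Since $\Phi$ is abelian-valued, $\Phi(gHg^{-1}) = \Phi(H)$ for every $g \in G$, and the disjointness persists after conjugation. Linear independence of the generators forces $H, K \cong \mathbb{Z}^m$ and makes $\Phi|_K$ injective, yielding $gHg^{-1} \cap K \subseteq \ker\Phi \cap K = \{1\}$.

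\textbf{Main obstacle.} The labour is concentrated in the construction of the invariants $\alpha_{v, j}$: the naive coordinate-wise map $G \to \pi_1(\graf)^k$ cannot detect a local loop at an essential vertex, which may be null-homotopic in $\graf$. One proceeds through the discrete cube-complex model of $\Conf_k(\graf)$, whose first homology admits a presentation with basis naturally indexed by (essential vertex, active particle) data; extracting the appropriate projections yields the $\alpha_{v, j}$. Verifying that they vanish on the transport pieces of $\sigma_i$---immediate once those pieces are supported in an essential-vertex-free region of $\graf$---completes the hypothesis-checking, and Theorem \ref{thm:lower bound} delivers the conclusion.
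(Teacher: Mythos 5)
The statement you were asked to prove is Theorem \ref{thm:lower bound}, the Farber--Oprea estimate $\mathrm{TC}_r(BG)\geq \mathrm{cd}(H\times K\times G^{r-2})$ for an \emph{arbitrary} discrete group $G$ with subgroups $H$ and $K$ such that every conjugate of $H$ is disjoint from $K$. Your proposal does not prove this statement; it \emph{invokes} it (``apply Theorem \ref{thm:lower bound} to $G:=P_k(\graf)$'') as the engine of an argument for a different result, namely Theorem \ref{thm:general lower bound}. As a proof of the assigned statement the proposal is therefore circular: none of the actual content of the Farber--Oprea theorem --- which in the original reference rests on identifying $\mathrm{TC}_r$ of an aspherical space with the minimal dimension of a classifying space of $G^r$ for the family of subgroups generated by the ``diagonal-type'' subgroups, together with the observation that $H\times K\times G^{r-2}$ meets every member of that family trivially under the stated conjugacy hypothesis --- appears anywhere in your write-up. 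Note that the paper itself offers no proof either: the theorem is quoted verbatim from its source, so there is nothing internal to compare against; but a blind proof attempt must supply that external argument, not assume its conclusion.

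Even read charitably as a proof of Theorem \ref{thm:general lower bound}, your sketch reproduces the paper's overall strategy (two families of commuting local star loops at $m$ essential vertices, using disjoint particle pairs, fed into Theorem \ref{thm:lower bound}) but breaks down at the disjointness-of-conjugates step. You propose to separate $H$ from all conjugates of $K$ by homomorphisms $\alpha_{v,j}\colon P_k(\graf)\to\mathbb{Z}$, i.e., by first-homology classes. The paper is explicit that abelian invariants cannot do this job in general: under the natural detection map to the theta graph, the local star loop becomes the word $[\gamma_{23}][\gamma_{12}][\gamma_{23}]^{-1}[\gamma_{12}]^{-1}[\gamma_{23}][\gamma_{12}][\gamma_{23}]^{-1}[\gamma_{12}]^{-1}$, nontrivial in the free group but a product of commutators and hence invisible to homology (see the Remark following Lemma \ref{lem:star injective}); moreover, the existence of the degree-one classes needed to decompose the relevant torus is \emph{equivalent to planarity} of $\graf$. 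This is precisely why the paper replaces your abelian $\Phi$ with the nonabelian detection homomorphisms $\delta_\mu$ into products of free groups $\pi_1(\thetagraph{d(v)},v_\infty)$, and proves injectivity by exhibiting a reduced word (Lemma \ref{lem:star injective}) rather than a nonzero homology class. Your asserted basis of $H_1$ indexed by (essential vertex, active particle) data does not exist for general, in particular non-planar, graphs, so the map $\Phi$ on which your disjointness argument depends is not available.
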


Our search for suitable subgroups will be organized by the following simple combinatorial object.

\begin{definition}
Fix a ground set $S$, a graph $\graf$, and a set $W$ of essential vertices. A \emph{binary $W$-partition} of $S$ is a collection $\lambda=\{\lambda(v)\}_{v\in W}$ of disjoint subsets of $S$ of cardinality $2$ whose union is $S$. We say that binary $W$-partitions $\lambda$ and $\mu$ are \emph{orthogonal}, and write $\lambda\perp\mu$, if $\lambda(v)\neq\mu(w)$ for every $(v,w)\in W\times W$.
\end{definition}

We will apply Theorem \ref{thm:lower bound} to subgroups $T_\lambda:=\mathrm{im}(\tau_\lambda)$, where $\lambda$ is a binary $W$-partition of $\{1,\ldots, k\}$ and $\tau_\lambda:\mathbb{Z}^W\to P_k(\graf)$ a certain homomorphism. To study these toric subgroups, we construct detection homomorphisms $\delta_\lambda:P_k(\graf)\to G_W$, where $G_W$ is a certain product of free groups. These homomorphisms are constructed in Section \ref{section:tori}, and they interact according to the following result, whose proof is taken up in Section \ref{section:proofs}.

\begin{proposition}\label{prop:homomorphisms}
Let $\lambda$ and $\mu$ be binary $W$-partitions of $\{1,\ldots, k\}$.
\begin{enumerate}
\item If $\lambda=\mu$, then the composite $\mathbb{Z}^{W}\xrightarrow{\tau_\lambda} P_k(\graf)\xrightarrow{\delta_\mu} G_W$ is injective.
\item If $\lambda\perp \mu$, then the composite $\mathbb{Z}^{W}\xrightarrow{\tau_{\lambda}} P_k(\graf)\xrightarrow{\delta_{\mu}} G_W$ is trivial.
\end{enumerate}
\end{proposition}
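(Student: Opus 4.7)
The plan is to exploit the expected product structure $\delta_\mu = \prod_{w\in W} \delta_\mu^w$ with each factor landing in a free group $F_w$, and to organize the analysis of $\delta_\mu\circ\tau_\lambda$ as a $W\times W$ matrix whose $(v,w)$-entry is $\delta_\mu^w(\tau_\lambda(e_v))\in F_w$. The proposition then reduces to showing that this matrix is ``diagonal with infinite-order entries'' in case~(1) and uniformly trivial in case~(2).

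The two key inputs from Section~\ref{section:tori} will be locality properties: $\tau_\lambda(e_v)$ is supported near $v$, meaning that only the two coordinates indexed by $\lambda(v)$ move and that both remain inside the closed star of $v$ throughout the loop; and $\delta_\mu^w$ records how the pair of strands indexed by $\mu(w)$ braids inside the star of $w$, in particular vanishing on any loop along which no strand of that pair moves within the star of $w$. Because the subdivision of $\graf$ fixed in Section~2 makes the closed stars of distinct essential vertices disjoint, every off-diagonal entry ($v\neq w$) vanishes at once: during $\tau_\lambda(e_v)$ no strand enters the star of $w$.

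For part~(1), when $\lambda=\mu$ the diagonal detection at $v$ observes precisely the two strands that execute the local braid, and so returns a canonical nontrivial element of $F_v$. Combined with the off-diagonal vanishing, $\delta_\lambda\circ\tau_\lambda$ sends $e_v$ to a tuple supported in its $v$-coordinate with nontrivial entry of infinite order. These tuples live in disjoint coordinate subgroups of $G_W$ and hence commute, so a general kernel element $\sum_v n_v e_v$ unpacks coordinatewise to the condition $(g_v^v)^{n_v}=1$ in the torsion-free group $F_v$, forcing $n_v=0$ for every $v\in W$.

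For part~(2), on the diagonal one has $\lambda(v)\neq\mu(v)$ as pairs, but the pairs may still share one element. If $\lambda(v)\cap\mu(v)=\varnothing$, neither $\mu(v)$-indexed strand moves during $\tau_\lambda(e_v)$, so $\delta_\mu^v$ is immediately trivial. The main obstacle, and the only place where the specific construction of the detection map must be used essentially, is the case $|\lambda(v)\cap\mu(v)|=1$: exactly one strand of the $\mu(v)$-pair is dragged once around the local braid at $v$ while its partner sits still at a nearby vertex. I expect $\delta_\mu^w$ to be constructed so that its output depends only on the simultaneous braiding of both strands of $\mu(w)$ at $w$---via, for instance, a forget-the-other-strands projection followed by a free quotient sensitive only to joint motion---making it insensitive to single-strand excursions. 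Verifying this vanishing is the technical heart of the argument, after which part~(2) follows coordinatewise.
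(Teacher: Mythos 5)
Your overall architecture---decomposing $\delta_\mu\circ\tau_\lambda$ into a $W\times W$ matrix of homomorphisms into free groups, killing the off-diagonal entries by disjointness of closed stars, and reducing injectivity to nontriviality in a torsion-free group---is exactly the skeleton of the paper's argument (its Lemma \ref{lem:triviality} is precisely your ``matrix entry'' statement). However, you have left the two computations that carry the actual content unverified, and in one case your guess at the mechanism is wrong. First, for the diagonal entry with $\lambda(v)=\mu(v)$, you assert that the detection ``returns a canonical nontrivial element.'' This is Lemma \ref{lem:star injective} in the paper, and it is not free: the detected element is the square of a commutator of free generators of $\pi_1(\thetagraph{3},v_\infty)$, which is nontrivial as a reduced word but \emph{vanishes in homology}---this is the precise point where the cohomological approach breaks down for non-planar graphs, so the nontriviality genuinely requires the free-group computation and cannot be waved through.

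Second, and more seriously, in the case $|\lambda(v)\cap\mu(v)|=1$ you propose that $\delta_\mu^v$ is ``constructed so that its output depends only on the simultaneous braiding of both strands,'' i.e.\ is insensitive to single-strand excursions by design. That is not how the detection map works: $q_v$ records the position of \emph{whichever} particle of the $\mu(v)$-pair lies in the open star of $v$, so a single strand passing through the star certainly produces a nontrivial path in $\thetagraph{d(v)}$. The vanishing you need is instead a computation: the lone shared strand, dragged around the local braid, traces the loop $\gamma_{12}\star\gamma_{23}\star\gamma_{31}$ in the theta graph, and this particular word is null-homotopic (since $[\gamma_{31}]=[\gamma_{32}\star\gamma_{21}]$, the product telescopes to $[\gamma_{12}][\gamma_{23}][\gamma_{23}]^{-1}[\gamma_{12}]^{-1}=1$). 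You correctly identify this case as the technical heart of part (2), but flagging it as ``to be verified'' with an incorrect expected mechanism leaves a genuine gap where the paper supplies an explicit and essential calculation.
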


Assuming this result, we prove the theorem.

\begin{proof}[Proof of Theorem \ref{thm:general lower bound}] The cases $m(\graf)\in\{0,1\}$ are easily treated by other means, so assume otherwise. By \cite[Prop. 5.6]{LuetgehetmannRecio-Mitter:TCCSFAGBG} and \cite[p. 4]{Farber:TCMP}, the quantity $\mathrm{TC}_r(\Conf_{k}(\graf))$ is non-decreasing in $k$ for fixed $r$. Thus, we may assume without loss of generality that $k=2d$ with $2\leq d\leq m(\graf)$. These assumptions imply that there exist binary $W$-partitions $\lambda$ and $\mu$ of $\{1,\ldots, 2d\}$ with $\lambda\perp\mu$ for some set $W$ of essential vertices. Our aim is to show that $\mathrm{TC}_r\geq rd$.

Assume that $r\geq2$. According to Proposition \ref{prop:homomorphisms}, we have the containment $T_{\lambda}\subseteq \ker(\delta_{\mu})$; therefore, since the latter subgroup is normal, it contains every conjugate of the former. Since $\ker(\delta_{\mu})$ is disjoint from $T_{\mu}$ by the same result, the group $G=P_k(\graf)$ and subgroups $H=T_\lambda$ and $K=T_\mu$ satisfy the hypotheses of Theorem \ref{thm:lower bound}. We conclude the relations 
\begin{align*}
\mathrm{TC}_r(\Conf_{2d}(\graf))&=\mathrm{TC}_r(BP_{2d}(\graf))\\
&\geq\mathrm{cd}(T_\lambda\times T_\mu\times P_{2d}(\graf)^{r-2})\\
&=\mathrm{cd}(\mathbb{Z}^{d}\times\mathbb{Z}^d\times P_{2d}(\graf)^{r-2})\\
&\geq\mathrm{cd}_\mathbb{Q}(\mathbb{Z}^{d}\times\mathbb{Z}^d\times P_{2d}(\graf)^{r-2}),
\end{align*} where the first uses homotopy invariance and asphericity, the second is Theorem \ref{thm:lower bound}, the third follows from Proposition \ref{prop:homomorphisms}, and the fourth follows by extension of scalars. Thus, it suffices to show that the rational homology of the group $\mathbb{Z}^{d}\times\mathbb{Z}^d\times P_{2d}(\graf)^{r-2}$ is nonzero in degree $rd$. Every group in sight has homology of finite type, so the K\"{u}nneth isomorphism applies, and the conclusion follows from the fact that $H_d(\Conf_{2d}(\graf);\mathbb{Q})\neq0$ for $2\leq d\leq m(\graf)$, which is well known---see \cite[Lem. 3.18]{AnDrummondColeKnudsen:ESHGBG}, for example.

In the case $r=1$, the claimed bound is well known; indeed, by asphericity and the Eilenberg--Ganea theorem \cite{EilenbergGanea:LSCAG}, the quantity to be estimated is simply $\mathrm{cd}(P_k(\graf))$, and the same rational homology calculation applies.
\end{proof}

\section{Star graphs and theta graphs}\label{section:stars and thetas}

This section is devoted to the simple calculation forming the basis for our later arguments.

\begin{definition}
The \emph{star graph} $\stargraph{n}$ is the cone on the discrete space $\partial\stargraph{n}:=\{v_1,\ldots, v_n\}$. The \emph{theta graph} $\thetagraph{n}$ is the quotient $\stargraph{n}/\partial \stargraph{n}$.
\end{definition}

Each star and theta graph has a canonical graph structure, which is canonically parametrized. We denote the unique essential vertex of $\stargraph{n}$, and its image in $\thetagraph{n}$, by $v_0$. We denote the image of $\partial \stargraph{n}$ in $\thetagraph{n}$ by $v_\infty$. Note that $\thetagraph{n}$ is (non-canonically) homotopy equivalent to a bouquet of $n-1$ circles.

The case $n=3$ is of fundamental importance. Within the configuration space $\Conf_2(\stargraph{3})$, there is the loop given by the sixfold concatenated path \[\epsilon=(\underline v_1, e_{23})\star (e_{12}, \underline v_3)\star (\underline v_2, e_{31})\star (e_{23}, \underline v_1)\star (\underline v_3, e_{12})\star (e_{31}, \underline v_2) ,\] where $\underline v_i$ denotes the constant path at $v_i$ and $e_{ij}:[0,1]\to \stargraph{3}$ the unique piecewise linear path from $v_i$ to $v_j$ with $e_{ij}(\frac{1}{2})=v_0$. Note that $\epsilon$ is a loop based at the configuration $(v_1, v_2)$. 

It is a standard fact that $\epsilon:S^1\to \Conf_2(\stargraph{3})$ is a homotopy equivalence. More precisely, we have the following.

\begin{lemma}\label{lem:local homeomorphism}
The map $\epsilon$ factors through an embedding $\tilde\epsilon: S^1\to \Conf_2(\stargraph{3})_{v_0}$ as a deformation retract.
\end{lemma}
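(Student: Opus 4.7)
The plan is three-step: first, check that $\epsilon$ actually takes values in $\Conf_2(\stargraph{3})_{v_0}$ so that the factored map $\tilde\epsilon$ is defined; second, verify that $\tilde\epsilon$ is a topological embedding; and third, exhibit a strong deformation retraction of $\Conf_2(\stargraph{3})_{v_0}$ onto the hexagonal image $\tilde\epsilon(S^1)$.

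For the first step, every one of the six concatenated segments defining $\epsilon$ pins one coordinate to a leaf $v_i \in \partial\stargraph{3}$, which lies outside the open star of $v_0$. Hence at most the other coordinate enters the open star, which is the defining condition of $\Conf_2(\stargraph{3})_{v_0}$.

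For the second step, I would trace through the loop to verify that the six leaf--leaf configurations $(v_i,v_j)$ with $i \neq j$ are visited once each in cyclic order, and that the six open arcs between consecutive visits are pairwise disjoint, since each arc is completely determined by specifying which of the two coordinates is moving and the leaf at which the other is pinned. Injectivity on $S^1$ together with compactness yield that $\tilde\epsilon$ is a topological embedding.

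For the third step, observe that $\Conf_2(\stargraph{3})_{v_0}$ is one-dimensional, being the locus of configurations $(x,y)$ with at least one coordinate a leaf. I would decompose it as the union of the six closed pieces
\[
A_i := \{v_i\}\times(\stargraph{3}\setminus\{v_i\}),\qquad B_j := (\stargraph{3}\setminus\{v_j\})\times\{v_j\},
\]
which overlap pairwise exactly at the six leaf--leaf configurations $(v_i,v_j)$ with $i\neq j$. Each such piece is a ``Y'' consisting of two closed legs, each terminating at a leaf--leaf configuration lying in $\tilde\epsilon(S^1)$, together with a third half-open leg approaching the removed leaf. On each piece I would define a strong deformation retraction that collapses the half-open leg linearly onto its central point $(v_i,v_0)$ or $(v_0,v_j)$, fixing both closed legs. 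Since distinct pieces overlap only at leaf--leaf configurations, all of which sit on closed (hence fixed) legs, these local retractions agree on overlaps and paste to a strong deformation retraction of $\Conf_2(\stargraph{3})_{v_0}$ onto the hexagonal subcomplex $\tilde\epsilon(S^1)$.

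The main obstacle is the third step: recognizing $\Conf_2(\stargraph{3})_{v_0}$ as a six-piece 1-complex with the prescribed overlap structure, and verifying that each half-open leg avoids the leaf--leaf intersection points so that the per-piece retractions glue. This is more bookkeeping than conceptual work, and once in hand, the conclusion is immediate.
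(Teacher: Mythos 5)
Your proposal is correct and follows the same route as the paper's proof: factor through the subspace, deduce that $\tilde\epsilon$ is an embedding from injectivity plus compactness and Hausdorffness, and observe that $\Conf_2(\stargraph{3})_{v_0}$ is the hexagonal image of $\tilde\epsilon$ with six half-open intervals attached, which can be collapsed to give the deformation retraction. You simply make explicit the bookkeeping (the $A_i$, $B_j$ decomposition and the gluing of the local retractions) that the paper leaves to the reader.
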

\begin{proof}
The image of $\epsilon$ lies in $\Conf_2(\stargraph{3})_{v_0}$ by construction, so $\tilde \epsilon$ exists. It is easy to check that $\epsilon$ is injective; therefore, since $S^1$ is compact and $\Conf_2(\stargraph{3})$ Hausdorff, it follows that $\tilde \epsilon$ is an embedding. The space $\Conf_2(\stargraph{3})_{v_0}$ is obtained from the image of $\tilde \epsilon$ by attaching half-open intervals, and collapsing these intervals provides a deformation retraction.
\end{proof}

We define a function $q:\Conf_2(\stargraph{3})_{v_0}\to \thetagraph{3}$ by recording the coordinate of any particle in $\stargraph{3}\setminus \partial\stargraph{3}$ and sending all other configurations to $v_\infty$. Using the gluing lemma, one shows easily that $q$ is continuous.

\begin{lemma}\label{lem:star injective}
The following composite homomorphism is injective: \[\pi_1(\Conf_2(\stargraph{3}),(v_1,v_2))\xrightarrow{(\iota_{v_0})_*^{-1}}\pi_1(\Conf_2(\stargraph{3})_{v_0}, (v_1,v_2)) \xrightarrow{q_*}\pi_1(\thetagraph{3}, v_\infty).\]
\end{lemma}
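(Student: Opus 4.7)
The plan is to reduce the statement to the computation of a single element of the free group $\pi_1(\thetagraph{3}, v_\infty)$ and to verify that this element is nontrivial.

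First, Proposition \ref{prop:local configurations} guarantees that $\iota_{v_0}$ is a homotopy equivalence, so $(\iota_{v_0})_*^{-1}$ is an isomorphism, and injectivity of the composite is equivalent to injectivity of $q_*$. By Lemma \ref{lem:local homeomorphism}, the source of $q_*$ is infinite cyclic, generated by the class of $\tilde\epsilon$; since $\iota_{v_0}\circ\tilde\epsilon=\epsilon$, this class maps to $[\epsilon]$ under $(\iota_{v_0})_*$, so it suffices to show that $q_*[\epsilon]$ has infinite order.

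Next, I would compute $q\circ\epsilon$ piecewise. Each of the six constituent paths in $\epsilon$ has one particle stationary at some $v_i\in\partial\stargraph{3}$ (which is invisible to $q$) while the other traverses an edge $e_{jk}$, passing through $v_0$ exactly once. Writing $\ell_i$ for the path in $\thetagraph{3}$ from $v_\infty$ to $v_0$ along the image of the $v_i$-edge, each piece contributes a loop $\alpha_{jk}:=[\ell_j\star\bar{\ell_k}]$ at $v_\infty$, where each intermediate concatenation point maps to $v_\infty$ since both particles then lie in $\partial\stargraph{3}$. Assembling the six pieces in order gives
\[q_*[\epsilon]=(\alpha_{23}\,\alpha_{12}\,\alpha_{31})^2.\]

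Finally, the fundamental group $\pi_1(\thetagraph{3},v_\infty)$ is free of rank $2$, and the three loops $\alpha_{12},\alpha_{23},\alpha_{31}$ satisfy the single relation $\alpha_{12}\,\alpha_{23}\,\alpha_{31}=1$; substituting $\alpha_{31}=\alpha_{23}^{-1}\,\alpha_{12}^{-1}$ yields $\alpha_{23}\,\alpha_{12}\,\alpha_{31}=[\alpha_{23},\alpha_{12}]$, so $q_*[\epsilon]=[\alpha_{23},\alpha_{12}]^2$, a nontrivial element of infinite order in a free group. The only delicate part is the path-tracing bookkeeping in the second step, but this is routine rather than a real obstacle, since both the combinatorics of $\epsilon$ and the definition of $q$ are transparent.
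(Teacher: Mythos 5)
Your argument is correct and is essentially the paper's own proof: both reduce via Proposition \ref{prop:local configurations} and Lemma \ref{lem:local homeomorphism} to checking that the image of the generator $[\tilde\epsilon]$ is nontrivial in the free group $\pi_1(\thetagraph{3},v_\infty)$, and both compute that image to be the square of the commutator of the two free generators $[\gamma_{23}]$ and $[\gamma_{12}]$ (your relation $\alpha_{12}\alpha_{23}\alpha_{31}=1$ is the paper's identity $[\gamma_{31}]=[\gamma_{32}\star\gamma_{21}]$ in disguise). The only cosmetic difference is that you conclude via ``infinite order of a nontrivial commutator power'' where the paper observes the resulting word is reduced and invokes torsion-freeness.
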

\begin{proof}
By Proposition \ref{prop:local configurations} and Lemma \ref{lem:local homeomorphism}, it suffices to show that $q_*\tilde \epsilon_*$ is injective. Since the target is a free group, and in particular torsion-free, it suffices to show that the path homotopy class $[q\circ\tilde\epsilon]$ is nontrivial. Writing $\gamma_{ij}$ for the loop in $\thetagraph{3}$ induced by $e_{ij}$, and noting the relations $[\gamma_{31}]=[\gamma_{32}\star\gamma_{21}]$ and $\bar\gamma_{ij}=\gamma_{ji}$, we have \begin{align*}
[q\circ\tilde \epsilon]&=[\gamma_{23}\star \gamma_{12}\star \gamma_{31}\star\gamma_{23}\star \gamma_{12}\star \gamma_{31}]\\
&=[\gamma_{23}\star\gamma_{12}\star\gamma_{32}\star\gamma_{21}\star\gamma_{23}\star\gamma_{12}\star\gamma_{32}\star\gamma_{21}]\\
&=[\gamma_{23}\star\gamma_{12}\star\bar\gamma_{23}\star\bar\gamma_{12}\star\gamma_{23}\star\gamma_{12}\star\bar\gamma_{23}\star\bar\gamma_{12}]\\
&=[\gamma_{23}][\gamma_{12}][\gamma_{23}]^{-1}[\gamma_{12}]^{-1}[\gamma_{23}][\gamma_{12}][\gamma_{23}]^{-1}[\gamma_{12}]^{-1}.
\end{align*} This expression is a nonempty reduced word in the set $\{[\gamma_{12}],[\gamma_{23}]\}$, which forms a system of free generators for $\pi_1(\thetagraph{3},v_\infty)$, implying the claim.
\end{proof}

\begin{remark}
One can interpret the theta graph appearing here (up to homotopy) as the configuration space of two unordered points in the quotient $\stargraph{3}/\partial\stargraph{3}$, regarded as a graph with a ``sink'' vertex at which points are permitted to collide \cite{ChettihLuetgehetmann:HCSTL}. Informally, a deformation retraction is provided by allowing particles to flow down the edges of the graph until at least one reaches the sink. The use of graphs with sinks in \cite{LuetgehetmannRecio-Mitter:TCCSFAGBG} inspired some of the ideas in the present article.
\end{remark}

\begin{remark}
The same calculation shows that the composite of Lemma \ref{lem:star injective} is trivial at the level of homology. Ultimately, it is this fact that forms the obstacle to a cohomological proof of Farber's conjecture for non-planar graphs---see \cite{Knudsen:FCPG} for further discussion.
\end{remark}

\section{Toric and detection homomorphisms}\label{section:tori}

Fix a set $W$ of essential vertices and a binary $W$-partition $\lambda$ of $\{1,\ldots, k\}$. For each $v\in W$, given our choice of parametrization and subdivision, there is a piecewise linear cellular embedding $f_v:\stargraph{3}\to \graf$ uniquely specified by requiring that it send the $i$th edge of $\stargraph{3}$ to the $i$th edge at $v$ for $1\leq i\leq 3$. These embeddings induce an embedding $f$ from a $W$-indexed disjoint union of star graphs (we use our assumption on the closed stars at essential vertices). Consider the composite embedding
\[\varphi_\lambda: (S^1)^W\to \Conf_2(\stargraph{3})^W\to \Conf_{k}\left(\coprod_{v\in W} \stargraph{3}\right)\to\Conf_{k}(\graf),\] where the first map is the canonical map $\epsilon$ in each factor; the second is the inclusion of the subspace in which the coordinates indexed by the elements of $\lambda(v)$ lie in the component indexed by $v$; and the third is induced by the embedding $f$. 

Finally, we choose a path $\alpha_\lambda$ in $\Conf_k(\graf)$ from $\varphi_\lambda(1)$ to $x_0$. We require this path to be a concatenation of coordinatewise linear edge paths in $\graf$ with all but one coordinate stationary. Such a path exists by our assumption that $\graf$ is connected and $m(\graf)>0$ (in particular, $\Conf_k(\graf)$ is connected).

\begin{remark}\label{remark:local factorization}
By construction, the image of the embedding $\varphi_\lambda$ lies in $\Conf_k(\graf)_v$ for every essential vertex $v$. The same holds for the path $\alpha_\lambda$. Our notation will not distinguish among the the various resulting paths, even though they have different codomains.
\end{remark}

With these desiderata in hand, we are ready to define the toric homomorphism associated to $\lambda$. The reader is cautioned that this homomorphism depends on many choices, none of which are reflected in our notation. Most of these choices affect the definition only up to conjugation in $P_k(\graf)$.

\begin{definition}
Let $\lambda$ be a binary $W$-partition of $\{1,\ldots, k\}$. The homomorphism $\tau_\lambda$ is the composite \[\mathbb{Z}^W\cong  \pi_1(S^1,1)^W\xrightarrow{(\varphi_\lambda)_*}\pi_1(\Conf_k(\graf), \varphi_\lambda(1))\xrightarrow{\hat \alpha_\lambda}P_k(\graf).\]
\end{definition}

The target of the detection homomorphism $\delta_\lambda$ will be the product of free groups $G_W:=\prod_{v\in W}\pi_1(\thetagraph{d(v)}, v_\infty))$. To define $\delta_\lambda$, we note that our description of the quotient map $q$ given in Section \ref{section:stars and thetas} in fact describes a map $q_v:\Conf_2(\graf)_v\to \thetagraph{d(v)}$ for any essential vertex $v$ in $\graf$. Note that we use our subdivision assumption as well as our chosen ordering of the edges at $v$. 

In the following definition, $\pi$ denotes coordinate projection.

\begin{definition}
Let $\lambda$ be a binary $W$-partition of $\{1,\ldots, k\}$. The homomorphism $\delta_\lambda: P_k(\graf)\to G_W$ is the homomorphism with $v$th component the composite {\small\[P_k(\graf)\xrightarrow{(\iota_v)_*^{-1}} \pi_1(\Conf_k(\graf)_v, x_0)\xrightarrow{(\pi_{\lambda(v)})_*} \pi_1(\Conf_2(\graf)_v, \pi_{\lambda(v)}(x_0))\xrightarrow{(q_v)_*}  \pi_1(\thetagraph{d(v)}, v_\infty).\]}
\end{definition}

Note that, since each coordinate of our basepoint $x_0$ is a bivalent vertex, the composite in question does send $x_0$ to $v_\infty$. We emphasize that, although the target of $\delta_\lambda$ depends only on $W$, the homomorphism itself depends on $\lambda$.

\section{Proof of Proposition \ref{prop:homomorphisms}}\label{section:proofs}

In this section, we consider the interaction between the toric and detection homomorphisms defined above. At various points, we will employ an unnamed map of the form $\Conf_2(\stargraph{3})\to \Conf_k(\graf)_w$, which is obtained by restricting the map $\Conf_2(\stargraph{3})^W\to \Conf_k(\graf)_w$ determined by a binary $W$-partition along the section of the projection onto the $w$th factor determined by respective basepoints.

The key result is the following.

\begin{lemma}\label{lem:triviality}
Let $\lambda$ and $\mu$ be binary $W$-partitions of $\{1,\ldots, k\}$. For $v,w\in W$, the composite homomorphism
\[
\xymatrix{
\mathbb{Z}\ar[r]^-{v\in W}&\mathbb{Z}^W\ar[r]^-{\tau_{\lambda}}&P_k(\graf)\ar[r]^-{\delta_{\mu}}&G_W\ar[r]^-{w\in W} &\pi_1(\thetagraph{d(w)},v_
\infty)
}
\] is trivial unless $v=w$ and $\lambda(v)=\mu(v)$. 
\end{lemma}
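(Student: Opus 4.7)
The plan is to unwind the definitions and exploit the conjugation structure of $\tau_\lambda$. The composite sends the generator $e_v$ to the image under $(q_w \circ \pi_{\mu(w)})_*$ of the class $[\alpha_\lambda \star \varphi_\lambda(e_v) \star \bar\alpha_\lambda]$, where by Remark \ref{remark:local factorization} we view the loop as lying in $\Conf_k(\graf)_w$. Up to a change of basepoint this image is a conjugate in $\pi_1(\thetagraph{d(w)}, v_\infty)$ of the class $[\rho] := (q_w \circ \pi_{\mu(w)})_*[\varphi_\lambda(e_v)]$, so it suffices to show $[\rho] = 1$ in the cases at hand.

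Suppose first that either $v \neq w$, or that $v = w$ with $\lambda(v) \cap \mu(v) = \varnothing$. I claim that no particle indexed by $\mu(w) = \{a, b\}$ enters the open star of $w$ along the loop $\varphi_\lambda(e_v)$. Indeed, the only particles moved by $\varphi_\lambda(e_v)$ are those indexed by $\lambda(v)$, which stay within the closed star of $v$; this closed star is disjoint from the open star of $w$ when $v \neq w$ by our subdivision assumption, and in the second scenario $\{a, b\}$ is disjoint from $\lambda(v)$, so neither $a$ nor $b$ moves at all. Meanwhile, every stationary particle sits at a point of the form $\varphi_\lambda(1)_i$, which is a leaf of a star at some essential vertex, hence on the boundary of the corresponding closed star and outside the open star of $w$. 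Thus $q_w$ collapses the projected path to the constant at $v_\infty$ and $[\rho] = 1$.

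The remaining case $v = w$ with $|\lambda(v) \cap \mu(v)| = 1$ is where the main content lives. Writing $\lambda(v) \cap \mu(v) = \{a\}$ and $\mu(v) = \{a, b\}$, the preceding argument shows that $b$ remains outside the open star of $v$, so $\rho$ records only the motion of $a$. Depending on whether $a$ is the first or second element of $\lambda(v)$ under our chosen ordering, particle $a$ executes one of the two coordinate loops of $\epsilon$, each of which cyclically visits the leaves $v_1, v_2, v_3$ of $\stargraph{3}$ through $v_0$; after $q_v$ the result is a threefold product $[\gamma_{ij}][\gamma_{jk}][\gamma_{ki}]$ in $\pi_1(\thetagraph{d(v)}, v_\infty)$ over the first three edges at $v$, for some cyclic permutation $(i,j,k)$ of $(1,2,3)$. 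Expanding $[\gamma_{ki}] = [\gamma_{kj} \star \gamma_{ji}]$ and using $\bar\gamma_{ij} = \gamma_{ji}$, exactly as in the proof of Lemma \ref{lem:star injective}, this product telescopes to the identity, completing the argument.
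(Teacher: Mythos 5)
Your proposal is correct and follows essentially the same route as the paper: reduce via change of basepoint to the unbased loop $\rho$, observe that in the cases $v\neq w$ and $\lambda(v)\cap\mu(v)=\varnothing$ no particle of $\mu(w)$ enters the open star of $w$ so that $q_w$ collapses everything to $v_\infty$, and in the remaining case $|\lambda(v)\cap\mu(v)|=1$ compute that the single tracked particle yields a cyclic product $[\gamma_{ij}][\gamma_{jk}][\gamma_{ki}]$ that telescopes to the identity. The only cosmetic differences are that the paper organizes the basepoint bookkeeping into a commutative diagram with the isomorphism $\hat\gamma$, and writes out only one of the two symmetric sub-cases of the final computation, both of which you handle.
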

\begin{proof}
After defining the paths $\beta=\pi_{\mu(w)}\circ\alpha_\lambda$ and $\gamma=q_w\circ\beta$, we have the commutative diagram of homomorphisms
{\tiny\[\xymatrix{
\pi_1(S^1,1)\ar[r]^-{v\in W}\ar[d]_-{\epsilon_*}&\pi_1(S^1,1)^W\ar[d]_-{(\varphi_{\lambda})_*}\\
\pi_1(\Conf_2(\stargraph{3}), (v_1,v_2))\ar@{-->}[dr]\ar[r]&\pi_1(\Conf_k(\graf),\varphi_{\lambda}(1))\ar[d]_-{(\iota_{w})_*^{-1}}\ar[r]^-{\hat\alpha_{\lambda}}&\pi_1(\Conf_k(\graf),x_0)\ar[d]^-{(\iota_{w})_*^{-1}}\\
&\pi_1(\Conf_k(\graf)_w,\varphi_\lambda(1))\ar[d]_-{(\pi_{\mu(w)})_*}\ar[r]^-{\hat\alpha_\lambda}&\pi_1(\Conf_k(\graf)_w,x_0)\ar[d]^-{(\pi_{\mu(w)})_*}\\
&\pi_1(\Conf_2(\graf)_w, \pi_{\mu(w)}(\varphi_\lambda(1)))\ar[d]_-{(q_w)_*}\ar[r]^-{\hat\beta}&\pi_1(\Conf_2(\graf)_w, \pi_{\mu(w)}(x_0))\ar[d]^-{(q_w)_*} \\
&\pi_1(\thetagraph{d(w)}, v_\infty)\ar[r]^-{\hat\gamma}&\pi_1(\thetagraph{d(w)}, v_\infty)),
}\]}where the dashed filler arises from Remark \ref{remark:local factorization}. The claim is that the total composite is trivial unless $v=w$ and $\lambda(v)=\mu(v)$. If $v\neq w$, then the composite map \[\Conf_2(\stargraph{3})\to \Conf_k(\graf)_w\xrightarrow{\pi_{\mu(w)}} \Conf_2(\graf)_w\xrightarrow{q_w} \thetagraph{d(w)}\] is the constant map to the basepoint; indeed, the image of the embedding $f_v:\stargraph{3}\to \graf$ is disjoint from the open star of $w$, since it is contained in the closed star of $v$. Here we use the assumption that $v\neq w$ as well as our assumption on the subdivision of $\graf$.

Without loss of generality, then, we may take $v=w$. If $\lambda(v)\cap \mu(v)=\varnothing$, then the composite map 
\[\Conf_2(\stargraph{3})\to \Conf_k(\graf)_v\xrightarrow{\pi_{\mu(v)}} \Conf_2(\graf)_v\] is constant with value $\pi_{\mu(v)}(\varphi_\lambda(1))$, and the claim follows as before; thus, we may assume that $\lambda(v)\cap \mu(v)=\{1\}$. In this case, the loop \[S^1\xrightarrow{\epsilon} \Conf_2(\stargraph{3})\to \Conf_k(\graf)_v\xrightarrow{\pi_{\mu(v)}} \Conf_2(\graf)_v\xrightarrow{q_v} \thetagraph{d(v)}\] is the sixfold concatenation $\underline v_\infty\star \gamma_{12}\star \underline v_\infty\star \gamma_{23}\star \underline v_\infty\star \gamma_{31}$, and we calculate that
\begin{align*}[\underline v_\infty\star \gamma_{12}\star \underline v_\infty\star \gamma_{23}\star \underline v_\infty\star \gamma_{31}]&=[\gamma_{12}\star\gamma_{23}\star\gamma_{31}]\\
&=[\gamma_{12}\star\gamma_{23}\star\gamma_{32}\star\gamma_{21}]\\
&=[\gamma_{12}\star\gamma_{23}\star\bar\gamma_{23}\star\bar\gamma_{12}]\\
&=[\gamma_{12}][\gamma_{23}][\gamma_{23}]^{-1}[\gamma_{12}]^{-1}\\
&=1.
\end{align*} Since the composite in question annihilates a generator, it is trivial.
\end{proof}

The desired result is now within easy reach.

\begin{proof}[Proof of Proposition \ref{prop:homomorphisms}]
Lemma \ref{lem:triviality} implies that the composite $\delta_\lambda\circ\tau_\lambda$ splits as a product of homomorphisms indexed by $W$, so the first claim is equivalent to the injectivity of each of these component homomorphisms. The injectivity of the component indexed by $v\in W$ is equivalent to the injectivity of the total composite in the commutative diagram
{\tiny\[\xymatrix{
\pi_1(S^1,1)\ar[r]^-{v\in W}\ar[d]_-{\epsilon_*}&\pi_1(S^1,1)^W\ar[d]_-{(\varphi_\lambda)_*}\\
\pi_1(\Conf_2(\stargraph{3}), (v_1,v_2))\ar[d]_-{(\iota_{v_0})_*^{-1}}\ar[r]&\pi_1(\Conf_k(\graf),\varphi_\lambda(1))\ar[d]_-{(\iota_{v})_*^{-1}}\ar[r]^-{\hat\alpha_\lambda}&\pi_1(\Conf_k(\graf),x_0)\ar[d]^-{(\iota_{v})_*^{-1}}\\
\pi_1(\Conf_2(\stargraph{3})_{v_0}, (v_1,v_2))\ar@{=}[d]\ar[r]&\pi_1(\Conf_k(\graf)_v,\varphi_\lambda(1))\ar[d]_-{(\pi_{\lambda(v)})_*}\ar[r]^-{\hat\alpha_\lambda}&\pi_1(\Conf_k(\graf)_v,x_0)\ar[d]^-{(\pi_{\lambda(v)})_*}\\
\pi_1(\Conf_2(\stargraph{3})_{v_0}, (v_1,v_2))\ar[d]_-{q_*}\ar[r]&\pi_1(\Conf_2(\graf)_v, \pi_{\lambda(v)}(\varphi_\lambda(1)))\ar[d]_-{(q_v)_*}\ar[r]^-{\hat\beta}&\pi_1(\Conf_2(\graf)_v, \pi_{\lambda(v)}(x_0))\ar[d]^-{(q_v)_*} \\
\pi_1(\thetagraph{3},v_\infty)\ar[r]&\pi_1(\thetagraph{d(v)}, v_\infty)\ar[r]^-{\hat\gamma}&\pi_1(\thetagraph{d(v)}, v_\infty)),
}\]}where the homomorphism in the bottom left is induced by the inclusion $\{1,2,3\}\subseteq\{1,\ldots, d(v)\}$. In terms of systems of free generators, this homomorphism is induced by the inclusion $\{[\gamma_{12}],[\gamma_{23}]\}\subseteq \{[\gamma_{12}],[\gamma_{23}],\ldots, [\gamma_{d(v)-1, d(v)}]\}$; in particular, it is injective. Since $\hat\gamma$ is an isomorphism, the claim follows from Lemma \ref{lem:star injective}, which implies that the composite in the lefthand vertical column is injective.

For the second claim, suppose that $\lambda\perp\mu$. Since $\delta_\mu\circ\tau_\lambda$ is a homomorphism out of a direct sum and into a product, it suffices to establish the triviality of the composite homomorphisms \[
\xymatrix{
\mathbb{Z}\ar[r]^-{v\in W}&\mathbb{Z}^W\ar[r]^-{\tau_\lambda}&P_k(\graf)\ar[r]^-{\delta_\mu}&G_W\ar[r]^-{w\in W}&\pi_1(\thetagraph{d(w)}, v_\infty)
}
\] for every $(v,w)\in W\times W$. By assumption, we have $\lambda(v)\neq \mu(w)$ for every such pair, so the claim follows from Lemma \ref{lem:triviality}.
\end{proof}

%
%

\bibliographystyle{amsalpha}
\bibliography{references}

\end{document}